\date{}
\newcommand{\R}{{\mathbb R}}
\newcommand{\const}{\mathrm{const}}
\newtheorem{lemma}{Lemma}
\theoremstyle{definition}
\newtheorem{remark}{Remark}
\title{On the structure of weak solutions to the Riemann problem for degenerate nonlinear diffusion equation}
\author{Evgeny~Yu.~Panov
\\ \small
Yaroslav-the-Wise Novgorod State University, Veliky Novgorod, Russian Federation, \\ \small Research and Development Center, Veliky Novgorod, Russian Federation and \\ \small Peoples' Friendship University of Russia (RUDN University), Moscow, Russian Federation
}
\begin{document}
\maketitle

\begin{abstract}
We find an explicit form of weak solutions to a Riemann problem for a degenerate semilinear parabolic equation with piecewise constant diffusion coefficient. It is demonstrated that the phase transition lines (free boundaries) correspond to the minimum point of some strictly convex function of a finite number of variables. In the limit as number of phases tend to infinity we obtain a variational formulation of self-similar solution with an arbitrary nonnegative diffusion function.
\end{abstract}

\bigskip
In a half-plane $t>0$, $x\in\R$, we consider a semilinear parabolic equation
\begin{equation}\label{1}
u_t=(a^2(u)u_x)_x=A(u)_{xx},
\end{equation}
where $A'(u)=a^2(u)$, $a(u)\in L^\infty(\R)$, $a(u)\ge 0$. The change $U=A(u)$ reduces (\ref{1}) to the equation
$$
b(U)_t=U_{xx},
$$
with $b(U)=A^{-1}(U)$ being a strictly increasing and possibly discontinuous function.

The same reduction was used in \cite[Chapter~5,\S~9]{LSU} for a Stefan problem. By the similar methods as in \cite{LSU} one can establish the existence and uniqueness of a bounded weak solution (understood in the sense of distributions) of the Cauchy problem for equation (\ref{1}). In the case of piecewise smooth weak solution the following conditions should be fulfilled on discontinuity lines $x=x(t)$:
\begin{equation}\label{RG}
[A(u)]=0, \quad [u]x'(t)+[A(u)_x]=0,
\end{equation}
where $[v]=v(t,x(t)+)-v(t,x(t)-)$ denotes a jump of a function $v=v(t,x)$ on the discontinuity line. Conditions (\ref{RG}) are derived by the application of the distribution $0=u_t-A(u)_{xx}$ to an arbitrary test function and integration by parts with the help of Green formula. As is easy to verify, these conditions together with the requirement that $u(t,x)$ is a classic solution of (\ref{1}) in the smoothness domains are equivalent to the statement that $u=u(t,x)$ is a weak solution of equation (\ref{1}).

We will study the Cauchy problem for equation (\ref{1}) with a Riemann initial data
\begin{equation}\label{2}
u(0,x)=\left\{\begin{array}{lr} u_-, & x<0, \\ u_+, & x>0. \end{array}\right.
\end{equation}
As was mentioned above, a weak solution of (\ref{1}), (\ref{2}) is unique. By the uniqueness and the invariance of the problem under the transformation group $(t,x)\to (\lambda^2 t, \lambda x)$, $\lambda\in\R$, $\lambda\not=0$, a weak solution of problem (\ref{1}), (\ref{2}) is self-similar: $u(t,x)=v(\xi)$, $\xi=x/\sqrt{t}$. In the case of the heat equation $u_t=a^2 u_{xx}$ a self-similar solution $u=v(\xi)$ must satisfy the linear ODE $a^2v''=-\xi v'/2$, the general solution of which is
$v=C_1F(\xi/a)+C_2$, $C_1,C_2=\const$, where $$F(\xi)=\frac{1}{2\sqrt{\pi}}\int_{-\infty}^\xi e^{-s^2/4}ds$$ is the error function. In particular, the solution of Riemann problem (\ref{1}), (\ref{2}) can be found if we choose $C_2=u_-$, $C_1=u_+-u_-$. We consider now the nonlinear case assuming that the diffusion coefficient is a piecewise constant function, $a(u)=a_k$ for $u_k<u<u_{k+1}$, $k=0,\ldots,n$, where
$$u_-=u_0<u_1<\cdots<u_n<u_{n+1}=u_+$$ (since equation (\ref{1}) is invariant under the change $x\to-x$, we can suppose that $u_+>u_-$), and $a_{k+1}\not=a_k$, $k=0,\ldots,n-1$. Let us begin with the nondegenerate case $a_k>0$, $k=0,\ldots,n$. We are going to find a weak solution $u=v(\xi)$ of (\ref{1}), (\ref{2}) as a piecewise smooth function
\begin{align}\label{3}
v(\xi)=u_k+\frac{u_{k+1}-u_k}{F(\xi_{k+1}/a_k)-F(\xi_k/a_k)}(F(\xi/a_k)-F(\xi_k/a_k)), \\ \nonumber
\xi_k<\xi<\xi_{k+1}, \ k=0,\ldots,n,
\end{align}
where $$-\infty=\xi_0<\xi_1<\cdots<\xi_n<\xi_{n+1}=+\infty$$ and we agree that $F(-\infty)=0$, $F(+\infty)=1$.
The parabolas $\xi=\xi_k$, $k=1,\ldots,n$, should be determine by conditions (\ref{RG}). Observing that the piecewise linear function $A(u)$ is strictly increasing, we conclude that the condition $[A(u)]=0$ reduces to the continuity condition $[u]=0$. We see that $u=u_k$ on the phase transition lines $\xi=\xi_k$, and these lines are weak discontinuities of $u(t,x)$. The second condition in (\ref{RG}) means that $[A(u)'_\xi]=0$ (i.e., $A(u)\in C^1$). In view of (\ref{3}) these conditions reduce to the equalities
\begin{equation}\label{4}
\frac{a_k(u_{k+1}-u_k)F'(\xi_k/a_k)}{F(\xi_{k+1}/a_k)-F(\xi_k/a_k)}=
\frac{a_{k-1}(u_k-u_{k-1})F'(\xi_k/a_{k-1})}{F(\xi_k/a_{k-1})-F(\xi_{k-1}/a_{k-1})}, \quad k=1,\ldots,n.
\end{equation}
To prove that the nonlinear system (\ref{4}) admits a solution, we observe that it is equivalent to the equality
$\nabla E(\bar\xi)=0$, where the function
\begin{equation}\label{5}
E(\bar\xi)=-\sum_{k=0}^n  (a_k)^2(u_{k+1}-u_k)\ln (F(\xi_{k+1}/a_k)-F(\xi_k/a_k)), \quad \bar\xi=(\xi_1,\ldots,\xi_n)\in\Omega,
\end{equation}
defined in the convex open domain $\Omega$ given by the inequalities $\xi_1<\cdots<\xi_n$. Obviously, $E(\bar\xi)\in C^\infty(\Omega)$. Notice that for all $k=0,\ldots,n$
\begin{equation}\label{6}
\ln (F(\xi_{k+1}/a_k)-F(\xi_k/a_k))<0.
\end{equation}
In particular, $E(\bar\xi)>0$. We will call the function $E(\bar\xi)>0$ \textbf{the entropy}, it depends only on discontinuities of the function $v(\xi)$.

\begin{lemma}\label{lem1}
The sets $E(\bar\xi)\le c$ are compact for each constant $c\in\R$.
\end{lemma}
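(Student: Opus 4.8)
The plan is to show that $E$ is coercive on $\Omega$, meaning $E(\bar\xi)\to+\infty$ whenever $\bar\xi$ tends to the boundary of $\Omega$ in $\R^n$ or $|\bar\xi|\to\infty$. Granting this, each sublevel set $\{E\le c\}$ will be bounded and closed in $\R^n$, hence compact by Heine--Borel. Since $E>0$, the set is empty (so trivially compact) for $c\le 0$, and I may assume $c>0$. The starting observation is to write $E(\bar\xi)=\sum_{k=0}^n c_k(-\ln d_k)$, where $c_k=(a_k)^2(u_{k+1}-u_k)>0$ and $d_k=F(\xi_{k+1}/a_k)-F(\xi_k/a_k)\in(0,1)$; by (\ref{6}) every summand is positive, so $E$ dominates each single term $c_k(-\ln d_k)$. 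This lets me detect blow-up of $E$ by tracking just one factor $d_k\to 0$.

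For boundedness I would use only the two end terms $k=0$ and $k=n$, where the conventions $\xi_0=-\infty$, $\xi_{n+1}=+\infty$ give $d_0=F(\xi_1/a_0)$ and $d_n=1-F(\xi_n/a_n)$. Because $F$ is a continuous, strictly increasing bijection of $\R$ onto $(0,1)$, the inequality $E\le c$ forces $-\ln F(\xi_1/a_0)\le c/c_0$ and $-\ln(1-F(\xi_n/a_n))\le c/c_n$, that is, $\xi_1\ge a_0 F^{-1}(e^{-c/c_0})=:M_1$ and $\xi_n\le a_n F^{-1}(1-e^{-c/c_n})=:M_2$. The ordering $M_1\le\xi_1<\cdots<\xi_n\le M_2$ then confines $\{E\le c\}$ to the cube $[M_1,M_2]^n$.

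For closedness I would take a sequence $\bar\xi^{(m)}\in\{E\le c\}$ with $\bar\xi^{(m)}\to\bar\xi^*$ in $\R^n$; by the previous step $\bar\xi^*\in[M_1,M_2]^n$ with $M_1\le\xi_1^*\le\cdots\le\xi_n^*\le M_2$. The only way $\bar\xi^*$ can fail to lie in $\Omega$ is that $\xi_k^*=\xi_{k+1}^*$ for some $1\le k\le n-1$; but both limits are then finite, so continuity of $F$ gives $d_k^{(m)}\to 0$ and hence $E(\bar\xi^{(m)})\ge c_k(-\ln d_k^{(m)})\to+\infty$, contradicting $E\le c$. Thus every limit point lies in $\Omega$, where continuity of $E$ yields $E(\bar\xi^*)\le c$, so the sublevel set is closed in $\R^n$.

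The main obstacle is to treat the whole boundary of $\Omega$ uniformly, since it mixes two qualitatively different degenerations: escape of the extreme free boundaries $\xi_1,\xi_n$ to $\pm\infty$, and collision of adjacent free boundaries $\xi_k\to\xi_{k+1}$. Reducing $E$ to a sum of positive terms isolates each degeneration into a single factor $d_k\to 0$, so no cancellation can occur; the only delicate point to verify is that at a finite coalescence point the arguments $\xi_k/a_k$ and $\xi_{k+1}/a_k$ remain finite, so that continuity of $F$ genuinely forces $d_k\to 0$ rather than an indeterminate difference.
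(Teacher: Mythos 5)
Your proof is correct and follows essentially the same route as the paper: since every summand of $E$ is positive, the bound $E\le c$ controls each difference $F(\xi_{k+1}/a_k)-F(\xi_k/a_k)$ from below, the two end terms confine $\xi_1$ and $\xi_n$ (hence all coordinates) to a bounded interval, and the interior terms rule out coalescence $\xi_k=\xi_{k+1}$. The only cosmetic difference is that the paper converts the lower bound on $F(\xi_{k+1}/a_k)-F(\xi_k/a_k)$ into an explicit quantitative gap $\xi_{k+1}-\xi_k\ge\delta_1$ via the Lipschitz bound on $F$, exhibiting an explicit compact set containing the sublevel set, whereas you establish closedness by a sequential contradiction argument; both are valid.
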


\begin{proof}
If $E(\bar\xi)\le c$ then it follows from (\ref{6}) that for all $k=0,\ldots,n$
\begin{align*}
-(a_k)^2(u_{k+1}-u_k)\ln (F(\xi_{k+1}/a_k)-F(\xi_k/a_k))\le E(\bar\xi)\le c,
\end{align*}
which implies the estimate
\begin{equation}\label{7}
F(\xi_{k+1}/a_k)-F(\xi_k/a_k)\ge \delta\doteq\exp(-c/m)>0,
\end{equation}
where $\displaystyle m=\min_{k=0,\ldots,n}(a_k)^2(u_{k+1}-u_k)>0$. In the particular cases $k=0,n$ these inequalities imply that $F(\xi_1/a_0)\ge\delta$, $F(-\xi_n/a_n)=1-F(\xi_n/a_n)\ge\delta$, therefore
$-r\le\xi_1<\xi_n\le r$, where $r$ is a positive constant satisfying the condition $\max(F(-r/a_0),F(-r/a_n))\le\delta$. Since the remaining coordinates of $\bar\xi$ are situated between $\xi_1$ and $\xi_n$, we find that ${\|\bar\xi\|_\infty\le r}$. Further, since $F'(x)=\frac{1}{2\sqrt{\pi}}e^{-x^2/4}<1$, the function $F(x)$ is Lipschitz with constant $1$ and it follows from (\ref{7}) that
$$
(\xi_{k+1}-\xi_k)/a_k\ge F(\xi_{k+1}/a_k)-F(\xi_k/a_k)\ge \delta, \quad k=1,\ldots,n-1.
$$
We find that $\xi_{k+1}-\xi_k\ge\delta_1=\delta\min a_k$. We conclude tat the set $E(\bar\xi)\le c$ lies in the compact set
$$
K=\{ \ \bar\xi=(\xi_1,\ldots,\xi_n)\in\R^n \ | \ \|\bar\xi\|_\infty\le r, \ \xi_{k+1}-\xi_k\ge\delta_1 \ \forall k=1 ,\ldots,n-1 \ \}.
$$
By the continuity of $E(\bar\xi)$ the set $E(\bar\xi)\le c$ is a closed subset of $K$ and therefore is compact.
\end{proof}
We take $c>N\doteq\inf E(\bar\xi)$. Then the set $E(\bar\xi)\le c$ is not empty. By Lemma~\ref{lem1} this set is compact and therefore the continuous function $E(\bar\xi)$ reaches the minimal value on it, which is evidently equal to $N$. We proved the existence of global minimum $E(\bar\xi_0)=\min E(\bar\xi)$. At the minimum point $\bar\xi_0$ the required condition $\nabla E(\bar\xi_0)=0$ holds and we conclude that system (\ref{4}) has a solution.

The uniqueness of this solution follows from the uniqueness of a weak solution of our Riemann problem. Alternatively, this uniqueness can be derived from strict convexity of the entropy function $E(\bar\xi)$ (so that this function can have at most one extremum in $\Omega$). In view of representation (\ref{5}) the strict convexity of the entropy easily follows from the lemma below.

\begin{lemma}\label{lem2}
The function $P(x,y)=-\ln (F(x)-F(y))$ is strictly convex in the half-plane $x>y$.
\end{lemma}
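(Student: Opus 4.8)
The plan is to prove strict convexity by showing that the Hessian of $P$ is positive definite at every point of the open half-plane $\{x>y\}$. Everything rests on the differential identity $F''(t)=-\tfrac{t}{2}F'(t)$, obtained by differentiating $F'(t)=\tfrac{1}{2\sqrt\pi}e^{-t^2/4}$, together with the reflection symmetry $F(t)+F(-t)=1$. Writing $G=F(x)-F(y)>0$, a direct differentiation of $P=-\ln G$ gives
\[
P_{xx}=\frac{(F'(x))^2-F''(x)G}{G^2},\quad P_{yy}=\frac{(F'(y))^2+F''(y)G}{G^2},\quad P_{xy}=-\frac{F'(x)F'(y)}{G^2},
\]
and I would substitute the identity for $F''$ to rewrite these purely in terms of $F$ and $F'$.

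For the leading entry, $P_{xx}=F'(x)\bigl(F'(x)+\tfrac{x}{2}G\bigr)/G^2$, so $P_{xx}>0$ is equivalent to $F'(x)+\tfrac{x}{2}G>0$. When $x\ge0$ both summands are nonnegative; when $x<0$ one uses $G<F(x)$ (since $F(y)>0$) to reduce the claim to the auxiliary inequality
\[
\phi(x):=F'(x)+\tfrac{x}{2}F(x)>0 \qquad (x\in\R).
\]
This inequality is the true cornerstone of the lemma. Since $\phi'(x)=\tfrac12 F(x)>0$, the function $\phi$ is strictly increasing, so it is enough to check that $\lim_{x\to-\infty}\phi(x)=0$; this follows from the standard tail estimate $F(x)\sim 2F'(x)/|x|$ as $x\to-\infty$, which makes the two terms of $\phi$ cancel to leading order. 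Hence $\phi>0$ everywhere.

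The heart of the proof is the positivity of the determinant. A computation (again using $F''=-\tfrac t2 F'$) collapses the mixed terms and yields
\[
P_{xx}P_{yy}-P_{xy}^2=\frac{F'(x)F'(y)}{4G^3}\,\Psi(x,y),\qquad \Psi(x,y)=2xF'(y)-2yF'(x)-xy\,G.
\]
As $F'>0$ and $G>0$, it remains to show $\Psi(x,y)>0$ for $x>y$. I would freeze $y$ and set $g(x)=\Psi(x,y)$ on $[y,\infty)$. Then $g(y)=0$, and the crucial simplification (after inserting the identity for $F''$) is
\[
g'(x)=2F'(y)-y\bigl(F(x)-F(y)\bigr),\qquad g''(x)=-y\,F'(x).
\]
For $y\le0$ the function $g$ is convex with $g(y)=0$ and $g'(y)=2F'(y)>0$, so $g>0$ at once. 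For $y>0$, $g$ is concave and this local argument breaks down: here I would instead use that $g'$ is decreasing with $\lim_{x\to+\infty}g'(x)=2F'(y)-y(1-F(y))=2\phi(-y)>0$ (by $F(+\infty)=1$ and the reflection identity $\phi(-y)=F'(y)-\tfrac y2(1-F(y))$), whence $g'>0$ throughout, $g$ is increasing, and $g>0$ again.

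The step I expect to be the main obstacle is exactly this $y>0$ case of the determinant. Unlike $P_{xx}$, the quantity $\Psi$ admits no termwise bound, and the auxiliary function $g$ is concave rather than convex, so its positivity must be extracted from the global behavior of $g'$ at $+\infty$ rather than from a one-sided estimate at $x=y$. What makes the argument close is that this limit and the bound for $P_{xx}$ both reduce to the single inequality $\phi>0$, the two instances being interchanged by the reflection $y\mapsto-y$. Once $P_{xx}>0$ and the determinant are positive, the Hessian is positive definite on all of $\{x>y\}$ and $P$ is strictly convex there.
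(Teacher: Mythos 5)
Your proof is correct, but it takes a genuinely different route from the paper's. You verify Sylvester's criterion directly: positivity of $P_{xx}$ and of $\det D^2P$, both of which you reduce (after the identity $F''(t)=-\tfrac t2 F'(t)$) to the single Gaussian-tail inequality $\phi(t)=F'(t)+\tfrac t2 F(t)>0$, proved via $\phi'=\tfrac12 F>0$ and $\phi(-\infty)=0$; the determinant then requires your one-variable analysis of $g(x)=\Psi(x,y)$ with the case split on the sign of $y$ (convex $g$ with $g'(y)>0$ versus concave $g$ with $g'(+\infty)=2\phi(-y)>0$). All of these computations check out. The paper avoids the determinant altogether: it sets $Q=(F(x)-F(y))^2D^2P$ and applies the Cauchy mean value theorem to get a single $z\in(y,x)$ with $(F'(x)-F'(y))/(F(x)-F(y))=-z/2$, which produces the decomposition $Q=R_1+F'(x)F'(y)R_2$, where $R_1$ is diagonal with positive entries proportional to $x-z$ and $z-y$, and $R_2=\left(\begin{smallmatrix}1&-1\\-1&1\end{smallmatrix}\right)\ge0$. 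The paper's argument is shorter and needs no asymptotics of $F$; yours is more computational but self-contained at the level of elementary calculus estimates, and it isolates the quantitative inequality $\phi>0$ (a Mills-ratio-type bound) as the one analytic fact underlying both leading minors, with the two instances exchanged by the reflection $F(-t)=1-F(t)$ --- a structural observation that the paper's decomposition conceals.
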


\begin{proof}
The function $P(x,y)$ is infinitely differentiable in the domain $x>y$. To prove the lemma, we need to establish that the Hessian $D^2 P$ is positive definite at every point. By the direct computation we find
\begin{align*}
\frac{\partial^2}{\partial x^2} P(x,y)=\frac{(F'(x))^2-F''(x)(F(x)-F(y))}{(F(x)-F(y))^2}, \\
\frac{\partial^2}{\partial y^2} P(x,y)=\frac{(F'(y))^2-F''(y)(F(y)-F(x))}{(F(x)-F(y))^2}, \
\frac{\partial^2}{\partial x\partial y} P(x,y)=-\frac{F'(x)F'(y)}{(F(x)-F(y))^2}.
\end{align*}
We have to prove positive definiteness of the matrix $Q=(F(x)-F(y))^2 D^2 P(x,y)$ with the components
\begin{align*}
Q_{11}=(F'(x))^2-F''(x)(F(x)-F(y)), \\ Q_{22}=(F'(y))^2-F''(y)(F(y)-F(x)), \ Q_{12}=Q_{21}=-F'(x)F'(y).
\end{align*}
Since $F'(x)=e^{-x^2/4}$, then $F''(x)=-\frac{x}{2}F'(x)$ and the diagonal elements of this matrix can be written in the form
\begin{align*}
Q_{11}=F'(x)(\frac{x}{2}(F(x)-F(y))+F'(x))= \\ F'(x)(\frac{x}{2}(F(x)-F(y))+(F'(x)-F'(y)))+F'(x)F'(y), \\
Q_{22}=F'(y)(\frac{y}{2}(F(y)-F(x))+(F'(y)-F'(x)))+F'(x)F'(y).
\end{align*}
By Cauchy mean value theorem there exists such a value $z\in (y,x)$ that
$$
\frac{F'(x)-F'(y)}{F(x)-F(y)}=\frac{F''(z)}{F'(z)}=-z/2.
$$
Therefore,
\begin{align*}
Q_{11}=F'(x)(F(x)-F(y))(x-z)/2+F'(x)F'(y), \\ Q_{22}=F'(y)(F(x)-F(y))(z-y)/2+F'(x)F'(y),
\end{align*}
and it follows that $Q=R_1+F'(x)F'(y)R_2$, where $R_1$ is a diagonal matrix with the positive diagonal elements
$F'(x)(F(x)-F(y))(x-z)/2$, $F'(y)(F(x)-F(y))(z-y)/2$ while $R_2=\left(\begin{smallmatrix} 1 & -1 \\ -1 & 1\end{smallmatrix}\right)$. Since $R_1>0$, $R_2\ge 0$, then the matrix $Q>0$, as was to be proved.
\end{proof}

Now we consider the case when some of the coefficients $a_k=0$. If $k=0$ or $k=n$, the structure (\ref{3}) of the solution remains the same but for $\xi<\xi_1$ (respectively, for $\xi>\xi_n$) the solution becomes constant: $v\equiv u_-$ ($v\equiv u_+$). Moreover, the discontinuity $\xi=\xi_1$ ($\xi=\xi_n$) is now strong, and condition (\ref{RG}) implies the following relation of Stefan type
$$
-(u_1-u_-)\xi_1/2=\frac{a_1(u_2-u_1)F'(\xi_1/a_1)}{F(\xi_2/a_1)-F(\xi_1/a_1)},
$$
respectively,
$$
(u_+-u_n)\xi_n/2=\frac{a_{n-1}(u_n-u_{n-1})F'(\xi_n/a_{n-1})}{F(\xi_n/a_{n-1})-F(\xi_{n-1}/a_{n-1})}.
$$
These relations replace, respectively, the first and the last equality in (\ref{4}).
Notice that the first condition $[A(u)]=0$ in (\ref{RG}) is satisfied because the function $A(u)$ is constant on the segment $[u_-,u_1]$ (on $[u_n,u_+]$).
Replacing the first term in sum (\ref{5}) by $(u_1-u_-)(\xi_1)^2/4$ (the last term by $(u_+-u_n)(\xi_n)^2/4$), we obtain that all the required conditions on the lines $\xi=\xi_k$, $k=1,\ldots,n$, reduce again to the condition $\nabla E=0$, and the solution is uniquely determined by the minimum point of the entropy $E(\bar\xi)$. Remark that after the described reduction the entropy function remains to satisfy the statements of Lemma~\ref{lem1}, Lemma~\ref{lem2}.

The case when the diffusion degenerates in an inner interval, i.e., when $a_k=0$ for
$0<k<n$, is more complicated. In this case two weak discontinuities $\xi=\xi_k,\xi_{k+1}$ merge into one strong discontinuity $\xi=\xi_k$ with the limit values $v(\xi_k-)=u_k$, $v(\xi_k+)=u_{k+1}$, (one just need to put $\xi_{k+1}=\xi_k$ in formula (\ref{3})).
Condition (\ref{RG}) on the line $\xi=\xi_k$ reduces to the equality
$$
-(u_{k+1}-u_k)\xi_k/2=\frac{a_{k+1}(u_{k+2}-u_{k+1})F'(\xi_k/a_{k+1})}{F(\xi_{k+2}/a_{k+1})-F(\xi_k/a_{k+1})}-
\frac{a_{k-1}(u_k-u_{k-1})F'(\xi_k/a_{k-1})}{F(\xi_k/a_{k-1})-F(\xi_{k-1}/a_{k-1})}.
$$
This condition replaces two conditions with numbers $k,k+1$ in system (\ref{4}), where we also agree that $\xi_{k+1}=\xi_k$. Generally, we get the system of $n-l$ equations with $n-l$ unknowns, where $l$ is a number of inner intervals $(u_k,u_{k+1})$ with degenerate diffusion ($a_k=0$). As above, we find that solutions of this system coincide with critical points of the entropy function  $E(\bar\xi)$ of $n-l$ variables defined by the expression
\begin{align}\label{ent}
E(\xi_1,\ldots,\xi_n)=-\sum_{k=0,\ldots,n, a_k>0} (a_k)^2(u_{k+1}-u_k)\ln (F(\xi_{k+1}/a_k)-F(\xi_k/a_k)) \nonumber\\ +\sum_{k=0,\ldots,n, a_k=0} (u_{k+1}-u_k)(\xi_k)^2/4,
\end{align}
where variables $\xi_k$, $\xi_{k+1}$ are identified whenever $a_k=0$, $0<k<n$.

In the same way as in the nondegenerate case we can easily prove that this function is strictly convex and reaches the unique minimal value $E(\bar\xi_0)$. Coordinates of the minimum point $\bar\xi_0$ determine the unique weak solution of problem  (\ref{1}), (\ref{2}).

\begin{remark}
Adding to the entropy (\ref{ent}) the constant $$\sum_{k=0,\ldots,n, a_k>0} (a_k)^2(u_{k+1}-u_k)\ln ((u_{k+1}-u_k)/a_k),$$ we obtain the alternative variant of the entropy
\begin{align}\label{ent1}
E_1(\bar\xi)=-\sum_{k=0,\ldots,n, a_k>0} (a_k)^2(u_{k+1}-u_k)\ln\left(\frac{F(\xi_{k+1}/a_k)-F(\xi_k/a_k)}{(u_{k+1}-u_k)/a_k}\right) \nonumber\\ +\sum_{k=0,\ldots,n, a_k=0} (u_{k+1}-u_k)(\xi_k)^2/4.
\end{align}
If we consider the values $a_k$ as a piecewise constant approximation of an arbitrary diffusion function $a(u)\ge 0$ then, passing in (\ref{ent1}) to the limit as $\max(u_{k+1}-u_k)\to 0$, we find that the entropy $E_1(\bar\xi)$ turns into the variational functional
$$
J(\xi)=-\int_{\{u\in [u_-,u_+], a(u)>0\}} (a(u))^2\ln(F'(\xi(u)/a(u))\xi'(u))du+\int_{\{u\in [u_-,u_+], a(u)=0\}} (\xi(u))^2/4du,
$$
where $\xi(u)$ is an increasing function on $[u_-,u_+]$, which is expected to be the inverse function to a self-similar solution $u=u(\xi)$ to problem (\ref{1}), (\ref{2}). Taking into account that
$$
\ln(F'(\xi(u)/a(u))\xi'(u))=\ln F'(\xi(u)/a(u))+\ln\xi'(u)=-\frac{(\xi(u))^2}{4a^2(u)}+\ln\xi'(u),
$$
we may simplify the expression for the functional $J(\xi)$
\begin{equation}\label{var}
J(\xi)=-\int_{u_-}^{u_+} (a(u))^2\ln(\xi'(u))du+\frac{1}{4}\int_{u_-}^{u_+} (\xi(u))^2du.
\end{equation}
We see that this functional is strictly convex. As is easy to verify, the corresponding Euler-Lagrange equation has the form
\begin{equation}\label{EL}
\xi(u)/2+((a(u))^2/\xi'(u))'=0.
\end{equation}
Since $u'(\xi)=1/\xi'(u)$, $u=u(\xi)$, we can transform (\ref{EL}) as follows $$\xi(u)/2+((a(u))^2 u'(\xi))'_u=0.$$ Multiplying this equation by $u'(\xi)$, we obtain the equation
$$
(a^2u')'=-\xi u'/2, \quad u=u(\xi),
$$
which is exactly our equation (\ref{1}) written in the self-similar variable.
We conclude that minimization of the functional (\ref{var}) gives a variational formulation of self-similar solutions of equation (\ref{1}).
\end{remark}

\section*{Acknowledgments}
The author thanks professor E.\,V.~Radkevich for fruitful discussions.
The research was supported by the Russian Science Foundation, grant 22-21-00344.

\end{document}